\begin{document}
\title[Nuclear and type I crossed products]{Nuclear and type I crossed
products of C*-algebras by group and compact quantum group actions}
\author{Raluca Dumitru and Costel Peligrad}
\address{Raluca Dumitru: Department of Mathematics and Statistics,
University of North Florida, 1 UNF Drive, Jacksonville, Florida 32224;
Institute of Mathematics of the Romanian Academy, Bucharest, Romania; E-mail
address: raluca.dumitru@unf.edu}
\address{Costel Peligrad: Department of Mathematical Sciences, University of
Cincinnati, 610A Old Chemistry Building, Cincinnati, OH 45221; E-mail
address: costel.peligrad@uc.edu}
\subjclass[2000]{47L65, 20G42}
\maketitle

\begin{abstract}
If $A$ is a C*-algebra, $G$ a locally compact group, $K\subset G$ a compact
subgroup and $\alpha:G\rightarrow Aut(A)$ a continuous homomorphism, let $%
A\times_{\alpha}G$ denote the crossed product. In this paper we prove that $%
A\times_{\alpha}G$ is nuclear (respectively type I or liminal) if and only
if certain hereditary C*-subalgebras, $S_{\pi}$, $\mathcal{I}_{\pi}\subset
A\times_{\alpha}G$ $\pi\in\widehat{K}$, are nuclear (respectively type I or
liminal). These algebras are the analogs of the algebras of spherical
functions considered by R. Godement for groups with large compact subgroups.
If $K=G$ is a compact group or a compact quantum group, the algebras $%
S_{\pi} $ are stably isomorphic with the fixed point algebras $A\otimes
B(H_{\pi })^{\alpha\otimes ad\pi}$ where $H_{\pi}$ is the Hilbert space of
the representation $\pi.$
\end{abstract}

\section{Introduction and preliminary results}

Let $G$ be a locally compact group and $K\subset G$ a compact subgroup. In 
\cite{godement} (see also \cite{warner}) the study of $\widehat{G}$, the set
of equivalence classes of irreducible representations of $G$ is reduced to
the study of $\widehat{K}$ and the representations of certain classes of
spherical functions. In this paper we extend this approach to the case of
crossed products of C*-algebras by locally compact group and compact quantum
group actions. Let $(A,G,\alpha)$ be a C*-dynamical system and let $K\subset
G$ be a compact subgroup.

In \cite{peligrad} we defined the C*-algebras $S_{\pi}$, $\mathcal{I}%
_{\pi}\subset A\times_{\alpha}G$, $\pi\in\widehat{K}$ where $\widehat{K}$ is
the set of all equivalence classes of unitary representations of $K.$ These
are the analogs of the algebras of the algebras of spherical functions. For
the case $K=G$, these algebras were previously defined by Landstad in \cite%
{landstad}.

Recently, in \cite{raljfa,ralpelspectra}, we have extended the study of
these algebras to the case of compact quantum group actions on C*-algebras.
If $K=G$ is a compact group or a compact quantum group, the algebras $%
S_{\pi} $ are stably isomorphic with the fixed point algebras $A\otimes
B(H_{\pi})^{\alpha\otimes ad\pi}$ where $H_{\pi}$ is the Hilbert space of
the representation $\pi$. In this section we will review some definitions
and preliminary results.

\subsection{Preliminaries on actions of compact groups on C*-algebras.}

\ 

Let $K$ be a compact group and denote by $\widehat{K}$ the set of all
equivalence classes of irreducible, unitary representations of $K$. Let $%
\delta :K\rightarrow Aut(A)$ be an action of $K$ on a C*-algebra $A.$ Let $%
\pi \in \widehat{K\text{.}}$ If $\pi _{ij}(g)$ are the coefficients of $\pi
_{g}$ in a fixed basis of the Hilbert space $H_{\pi }$ of the representation 
$\pi ,1\leq i,j\leq d_{\pi }$ we define the character of $\pi ,\chi _{\pi
}(g)=d_{\pi }tr(\pi _{g^{-1}})=d_{\pi }\sum \overline{\pi _{ii}(g)},g\in K$
where $d_{\pi }$ is the dimension of the representation $\pi .$ We consider
the following mapping from $B$ into itself :

\begin{center}
$P^{\pi ,\delta }(a)=\int_{K}\chi _{\pi }(k)\delta _{k}(a)dk$
\end{center}

We define the spectral subspaces of the action $\delta$

\begin{center}
$A_{1}^{\delta }(\pi )=\left\{ a\in A|P^{\pi ,\delta }(a)=a\right\} $, $\pi
\in \widehat{K}$
\end{center}

In particular if $\pi=\pi_{0},$ is the trivial one dimensional
representation, $A_{1}^{\delta}(\pi_{0})=A^{\delta}$ is the algebra of fixed
elements under the action $\delta$. In this case, the projection $%
P^{\pi_{0},\delta}$ of $A$ onto $A^{\delta}$ is a completely positive map.
Indeed, the extension of $P^{\pi_{0},\delta}$ to $M_{n}(A)$ is the
projection of this latter C*-algebra onto its fixed point algebra with
respect to the action $\alpha\otimes id$ where $id$ is the trivial action of 
$G$ on $M_{n}=B(H_{n})$ where $H_{n}$ is the Hilbert space of dimension $n$.

\subsection{Algebras of spherical functions inside the crossed product}

\ 

Let now $(A,G,\alpha)$ be a C*-dynamical system with $G$ a locally compact
group and $K\subset G$ a compact subgroup. Denote by $A\times_{\alpha}G$ the
corresponding crossed product (see for instance \cite{pedersen}). Then the
algebra $C(K)$ of all continuous functions on $G$ can be embedded as follows
in the multiplier algebra $M(A\times_{\alpha}G)$ of $A\times_{\alpha}G$: If $%
\varphi\in C(K)$ and $y\in C_{c}(G,A),$the dense subalgebra of $A\times
_{\alpha}G$ consisting of continuous functions with compact support from $G$
to $A$, then

\begin{center}
$(\varphi y)(g)=\int_{K}\varphi(k)\alpha_{k}(y(k^{-1}g))dk$
\end{center}

and

\begin{center}
$(y\varphi)(g)=\int_{K}\varphi(k)y(gk)dk$
\end{center}

In particular, if $\varphi=\chi_{\pi},$ $\varphi$ is a projection in $%
M(A\times_{\alpha}G)$ and if $\pi_{1}$ and $\pi_{2}$ are distinct elements
in $\widehat{K\text{,}}$ the projections $\chi_{\pi_{1}}$ and $%
\chi_{\pi_{2}} $ are orthogonal. We need the following results from [\cite%
{peligrad}, Lemma 2.5.]:

\begin{remark}
\label{Lemma2.5JFA}The following statements hold:\newline i) If $\pi_{1} \neq\pi_{2}$ in $\widehat{K}$ then the projections $\chi_{\pi_{1}}$ and
$\chi_{\pi_{2}}$ are orthogonal in $M(A\times_{\alpha}G)$.\newline ii) $\sum_{\pi}\chi_{\pi}=I$, where $I$ is the identity of the bidual $(A\times_{\alpha}G)^{\star\star}$ of $A\times_{\alpha}G$.
\end{remark}

If $\pi\in\widehat{K}$, denote $S_{\pi}=\overline{\chi_{\pi}(A\times_{%
\alpha}G)\chi_{\pi}}$, where the closure is taken in the norm topology of $%
A\times_{\alpha}G$ Then, it is immediate that $S_{\pi}$ is strongly Morita
equivalent with the two sided ideal $J_{\pi}\overline{=(A\times_{\alpha}G)%
\chi_{\pi}(A\times_{\alpha}G)}$. Indeed, it can be easily verified that $X=%
\overline{(A\times_{\alpha}G)\chi_{\pi}}$ is an $S_{\pi}-J_{\pi}$
imprimitivity bimodule. We will consider next the action, $\delta$ of $K$ on 
$A\times_{\alpha}G$ defined as follows: If $y\in C_{c}(G,A)$ set $%
\delta_{k}(y)=\alpha_{k}(y(k^{-1}gk)$. Then $\delta_{k}$ extend to
automorphisms of $A\times_{\alpha}G$ and thus $\delta$ is an action of $K$
on $A\times_{\alpha}G$. The fixed point algebra $\mathcal{I=}%
(A\times_{\alpha}G)^{\delta}$ is called in \cite{peligrad} the algebra of
K-central elements of the crossed product $A\times_{\alpha}G$. Denote:

\begin{center}
$\mathcal{I}_{\pi}=\mathcal{I\cap}S_{\pi}$
\end{center}

Then, [\cite{peligrad}, Proposition 2.7.], we have

\begin{remark}
\label{JFA2.7}$S_{\pi}$ is $\ast-$isomorphic with $\mathcal{I}_{\pi}\otimes B(H_{\pi})$.
\end{remark}

If $G=K$ is a compact group, then by [\cite{landstad}, Lemma 3] we have:

\begin{remark}
\label{landstad lemma3}For every $\pi\in\widehat{G}$, $\mathcal{I}_{\pi}$ is $\ast-$ isomorphic with $(A\otimes B(H_{\pi}))^{\alpha\otimes ad\pi}$.
\end{remark}

\subsection{Compact quantum group actions on C*-algebras}

\ 

Let $\mathcal{G}=(B,\Delta)$ be a compact quantum group (\cite{wor1,wor2}).
Here, $B$ is a unital C*-algebra (which is the analog of the C*-algebra of
continuous functions in the group case) and $\Delta:B\rightarrow
B\otimes_{\min}B$ a $\ast$-homomorphism such that:

i) $(\Delta\otimes\iota)\Delta=(\iota\otimes\Delta)\Delta$, where $%
\iota:B\rightarrow B$ is the identity map and

ii) $\overline{\Delta(B)(1\otimes B)}=\overline{\Delta(B)(B\otimes1)}%
=B\otimes_{\min}B$.

Let $\widehat{\mathcal{G}}$ denote the set of all equivalence classes of
unitary representations of $\mathcal{G}$ or equivalently, the set of all
equivalence classes of irreducible unitary co-representations of $B$. For
each $\pi\in\widehat{\mathcal{G}}$, $\pi=\left[ \pi_{ij}\right] $, $%
\pi_{ij}\in B$ $1\leq i,j\leq d_{\pi}$,where $d_{\pi}$ is the dimension of $%
\pi$, let $\chi_{\pi}=\sum_{i}\pi_{ii}$ be the character of $\pi$ and let $%
F_{\pi}\in B(H_{\pi})$ be the positive, invertible matrix that intertwines $%
\pi$ with its double contragredient representation and such that $%
tr(F_{\pi})=tr(F_{\pi }^{-1})=M_{\pi}$. Then, with the notations in \cite%
{wor1}, $F_{\pi}=\left[ f_{1}(\pi_{ij})\right] $ where $f_{1}$ is a linear
functional on the $\ast -$subalgebra $\mathcal{B\subset}B$ that is linearly
spanned by $\left\{\pi_{ij}|\pi\in\widehat{\mathcal{G}},1\leq i,j\leq
d_{\pi}\right\}$. If $a\in B$ (respectively $\mathcal{B}$) and $\xi$ is a
linear functional on $B$ (respectively $\mathcal{B}$) we denote (\cite%
{wor1,wor2})

\begin{center}
$a\ast\xi=(\xi\otimes\iota)(\Delta(a))\in B$
\end{center}

Denote also by $\xi\cdot a$ the following linear functional on $B$
(respectively $\mathcal{B}$):

\begin{center}
$(\xi\cdot a)(b)=\xi(ab)$
\end{center}

If $h$ is the Haar state on $B$ let $h_{\pi}=M_{\pi}h\cdot(\chi_{\pi}\ast
f_{1})$. If $v_{r}$ is the right regular representation of $\mathcal{G}$,
the Fourier transform of $a\in B$ is defined as follows:

\begin{center}
$\widehat{a}=\mathcal{F}_{v_{r}}(a)=(\iota\otimes h\cdot a)(v_{r}^{\star})$
\end{center}

where $\mathcal{F}_{v_{r}}$ is the Fourier transform as defined by
Woronowicz in \cite{wor2}. Then the norm closure of the set $\widehat{B}%
=\left\{ \widehat {a}|a\in B\right\} $ is a C*-algebra called the dual of $B$
(\cite{baaj,wor2}) and $\widehat{B}$ is a subalgebra of the algebra of
compact operators, $\mathcal{C(}H_{h})$ on the Hilbert space $H_{h}$ of the
GNS representation of $B$ associated with the Haar state $h$.

Let $A$ be a C*-algebra and $\delta:A\rightarrow M(A\otimes B)$ be a $\ast-$
homomorphism of $A$ into the multiplier algebra of the minimal tensor
product $A\otimes B$. Then $\delta$ is called an action of $\mathcal{G}$ on $%
A$ (or a coaction of $B$ on $A$) if the following two conditions hold:%
\newline
a) $(\iota\otimes\Delta)\delta=(\delta\otimes\iota)\delta$ and\newline
b) $\overline{\delta(A)(1\otimes B)}=A\otimes B$

Let $\pi\in\widehat{\mathcal{G}}$. Denote $P^{\pi,\delta}(a)=(\iota\otimes
h_{\pi})(\delta(a)),a\in A$. Then $P^{\pi,\delta}$ is a contractive linear
map from $A$ into itself. In particular, if $\pi=\pi_{0}$ is the trivial one
dimensional representation, then $P^{\pi_{0},\delta}=(\iota\otimes h)\delta$
is the completely positive projection of norm $1$ of $A$ onto the fixed
point C*-subalgebra $A^{\delta}$.

The crossed product $A\times_{\delta}\mathcal{G}$ is by definition, (\cite%
{baaj,boca}), the norm closure of the set $\left\{
(\pi_{u}\otimes\pi_{h})(\delta (a)(1\otimes\widehat{b})|a\in A,b\in
B\right\} $, where $\pi_{u}$\ is the universal representation of $A$\ and $%
\pi_{h}$\ is the GNS representation of $B$ associated with the Haar state $h$%
.

Let $\pi\in\widehat{\mathcal{G}}$. If we denote $p_{\pi}=(\iota\otimes
h_{\pi})(v_{r}^{\star})$, then $\left\{ p_{\pi}\right\} _{\pi\in\widehat {%
\mathcal{G}}}$ are mutually orthogonal projections in $\widehat{B}$ and
therefore in $A\times_{\delta}\mathcal{G}$ (\cite{boca,raljfa}). For $\pi\in 
\widehat{\mathcal{G}}$ denote $\mathcal{S}_{\pi}=\overline{%
p_{\pi}(A\times_{\delta}\mathcal{G)}p_{\pi}}$. In [\cite{raljfa}, Lemma 3.3]
it is shown that $ad(v_{r})$ is an action of $\mathcal{G}$ on the crossed
product $A\times_{\delta}\mathcal{G}$ and the fixed point algebra $\mathcal{%
I=(}A\times_{\delta}\mathcal{G)}^{ad(v_{r})}$ of this action plays the role
of the $K-$central elements in the case of groups. Let $\mathcal{I}_{\pi}=%
\mathcal{I\cap S}_{\pi}$. Let $\delta_{\pi}$ be the following action of $%
\mathcal{G}$ on $A\otimes B(H_{\pi})$:

\begin{center}
$\delta_{\pi}(a\otimes m)=(\pi)_{23}(\delta(a))_{13}(1\otimes m\otimes
1)(\pi^{\ast})_{23}$
\end{center}

where the leg-numbering notation is the usual one (\cite{baaj,wor2}). The
above $\delta_{\pi}$ equals $\delta\otimes ad(\pi)$ in the case of compact
groups. Then, we have:

\begin{remark}
\label{ralucaanalogsof2.2,2.8andlandstadlemma}The following statements hold
true:\newline i) The projections $\left\{  p_{\pi}\right\}  _{\pi\in
\widehat{\mathcal{G}}}$ are mutually orthogonal and $\sum_{\pi}p_{\pi}=1$ in
the bidual $(A\times_{\delta}\mathcal{G)}^{\star\star}$\newline ii)
$\mathcal{S}_{\pi}$ is $\star-$isomorphic with $\mathcal{I}_{\pi}\otimes
B(H_{\pi})$\newline iii) $\mathcal{I}_{\pi}$ is $\star-$isomorphic with
$A\otimes B(H_{\pi})^{\delta_{\pi}}$
\end{remark}

\begin{proof}
Part i) is [\cite{raljfa}, Section 2.1., Equation (2) and the discussion
after that equation]. Part ii) is [\cite{raljfa}, Remark 3.5.] and Part iii)
is [\cite{raljfa}, Proposition 4.8.].
\end{proof}

\section{Nuclear and type I crossed products}

In this section we will state and prove our main results. We give necessary
and sufficient conditions for a crossed product to be nuclear or type I. Our
conditions are given in terms of the algebras of spherical functions inside
the crossed product and in case of compact groups or compact quantum groups,
in terms of the fixed point algebras of $A\otimes B(H_{\pi})$ for the
actions $\delta\otimes ad(\pi)$.

Recall that a C*-algebra $C$ is said to be of type I if for every factor
representation $T$ of $C$ the Von Neumann factor $T(C)^{\prime\prime}$ is a
type I factor. $C$ is called liminal if for every irreducible representation 
$T$ of $C$, $T(C)$ consists of compact operators.

A C*-algebra is called nuclear if its bidual, $C^{\ast\ast}$, is an
injective von Neumann algebra, i.e. if and only if there is a projection of
norm one from $B(H_{u})$ onto $C^{\ast\ast}$, where $H_{u}$ is the Hilbert
space of the universal representation of $C$. With the notations from
Section 1, we have the following :

\begin{remark}
\label{Cor 2.8JFA}Let $(A,G,\alpha)$ be a C*-dynamical system with G a locally compact group and let $K\subset G$ be a compact subgroup. The following three statements hold:\newline i) $S_{\pi}$ is nuclear if and only if $\mathcal{I}_{\pi}$ is nuclear\newline ii) $S_{\pi}$ is liminal if and only if $\mathcal{I}_{\pi}$ is liminal\newline iii) $S_{\pi}$ is type I if and only if $\mathcal{I}_{\pi}$ is type I
\end{remark}

\begin{proof}
These statements follow from Remark \ref{JFA2.7}.
\end{proof}

The following is the analog of the above Remark for the case of compact
quantum group actions:

\begin{remark}
\label{followsfromRalRemark3.5}Let $\mathcal{G}=(B,\Delta)$ be a compact quantum group and $\delta$ an action of $\mathcal{G}$ on a C*-algebra $A$. The
following conditions are equivalent:\newline i) $\mathcal{S}_{\pi}$ is nuclear if and inly if $(A\otimes B(H_{\pi}))^{\delta_{\pi}}$ is nuclear\newline ii)
$\mathcal{S}_{\pi}$ is liminal if and inly if $(A\otimes B(H_{\pi}))^{\delta_{\pi}}$ is liminal\newline iii) $\mathcal{S}_{\pi}$ is type I if and only if $(A\otimes B(H_{\pi}))^{\delta_{\pi}}$ is type I
\end{remark}

\begin{proof}
The result follows from Remark \ref{ralucaanalogsof2.2,2.8andlandstadlemma}.
\end{proof}

\subsection{Type I crossed products}

\ 

We start with the following general result:

\begin{lemma}
\label{typeIlemma}Let $C$ be a C*-algebra and $M(C)$ the multiplier algebra of $C.$ Let $\left\{  p_{\lambda}\right\}  \subset M(C)$ be a family of mutually
orthogonal projections of sum 1 in $C^{\ast\ast}$, the bidual of $C$. The following conditions are equivalent:\newline i) $C$ is type I (respectively
liminal) \newline ii) The hereditary subalgebras $S_{\lambda}=p_{\lambda}Cp_{\lambda}\subset C$ are type I (respectively liminal) for every $\lambda$.
\end{lemma}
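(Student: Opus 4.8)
The plan is to prove both implications by reducing the properties of the corners $S_{\lambda}=p_{\lambda}Cp_{\lambda}$ to those of the closed ideals they generate, $I_{\lambda}=\overline{Cp_{\lambda}C}$, and then to transfer information between $C$ and the $I_{\lambda}$ using the hypothesis $\sum_{\lambda}p_{\lambda}=1$ in $C^{\ast\ast}$. Throughout I would use that $S_{\lambda}$ is a full hereditary subalgebra of $I_{\lambda}$ (the bimodule $X=\overline{Cp_{\lambda}}$ is an $S_{\lambda}$--$I_{\lambda}$ imprimitivity bimodule, just as for $S_{\pi}$ and $J_{\pi}$ in Section 1), so that $S_{\lambda}$ and $I_{\lambda}$ are strongly Morita equivalent, together with the standard facts that type I and liminality are invariant under strong Morita equivalence and pass to closed ideals, quotients, and hereditary subalgebras.

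The implication $(i)\Rightarrow(ii)$ is the routine one: if $C$ is type I (respectively liminal), then each ideal $I_{\lambda}$ has the same property, and by Morita equivalence so does $S_{\lambda}$; equivalently one may simply invoke that a hereditary subalgebra of a type I (respectively liminal) algebra is again type I (respectively liminal).

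For the converse $(ii)\Rightarrow(i)$ I would proceed in three steps. First, from $S_{\lambda}$ type I (respectively liminal) and the Morita equivalence $S_{\lambda}\sim I_{\lambda}$, conclude that every $I_{\lambda}$ is type I (respectively liminal). Second, and this is where the hypothesis on the projections enters, I claim $\overline{\sum_{\lambda}I_{\lambda}}=C$: for $c,d\in C$ the partial sums $\sum_{\lambda\in F}cp_{\lambda}d$ are bounded by $\|c\|\,\|d\|$ and converge $\sigma$-weakly in $C^{\ast\ast}$ to $c\bigl(\sum_{\lambda}p_{\lambda}\bigr)d=cd$; since each $cp_{\lambda}d$ lies in the closed ideal $J=\overline{\sum_{\lambda}I_{\lambda}}$ and a closed ideal satisfies $J=C\cap J^{\ast\ast}$, it follows that $cd\in J$, whence $C=\overline{\mathrm{span}}\,(C\cdot C)\subseteq J$. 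Third, I finish via the representation-theoretic characterizations: $C$ is type I iff $T(C)\supseteq K(H_{T})$ for every irreducible representation $T$, and $C$ is liminal iff $T(C)\subseteq K(H_{T})$ for every such $T$. Fixing an irreducible $T$ on $H$: in the type I case, density of $\sum_{\lambda}I_{\lambda}$ forces $T(I_{\lambda_{0}})\neq0$ for some $\lambda_{0}$, the restriction of $T$ to the ideal $I_{\lambda_{0}}$ is then irreducible, and type I of $I_{\lambda_{0}}$ gives $K(H)\subseteq T(I_{\lambda_{0}})\subseteq T(C)$; in the liminal case, for each $\lambda$ the restriction $T|_{I_{\lambda}}$ is either zero or irreducible, so liminality of $I_{\lambda}$ yields $T(I_{\lambda})\subseteq K(H)$ in all cases, hence $T\bigl(\sum_{\lambda}I_{\lambda}\bigr)\subseteq K(H)$, and since $K(H)$ is norm closed and $\overline{\sum_{\lambda}I_{\lambda}}=C$ we obtain $T(C)\subseteq K(H)$.

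The main obstacle I anticipate is the second step of the converse: extracting the honest norm-density statement $\overline{\sum_{\lambda}I_{\lambda}}=C$ from the purely $\sigma$-weak hypothesis $\sum_{\lambda}p_{\lambda}=1$ in $C^{\ast\ast}$. The delicate point is that summability holds only in the bidual, so one must pass through the identity $J=C\cap J^{\ast\ast}$ for closed ideals in order to bring the conclusion back inside $C$; a naive strict- or norm-topology argument would fail, as the example $C=B(H)$ with $\{p_{\lambda}\}$ a maximal family of rank-one projections already shows, since there the $p_{\lambda}$ do not sum to $1$ in $C^{\ast\ast}$. A secondary point worth checking with care is the Morita invariance of liminality, which is less standard than that of type I and is precisely what couples Steps 1 and 3.
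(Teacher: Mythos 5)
Your proof is correct, and its skeleton coincides with the paper's: both directions pass, via the imprimitivity bimodule $\overline{Cp_{\lambda}}$, between the corner $S_{\lambda}$ and the ideal $J_{\lambda}=\overline{Cp_{\lambda}C}$ (your $I_{\lambda}$), citing Rieffel for Morita invariance of type I and Fell for liminality, and both use $\sum_{\lambda}p_{\lambda}=1$ in $C^{\ast\ast}$ to find a $\lambda$ with nonzero restriction. There are two real differences. First, your density step is \emph{more} careful than the paper's: the paper simply asserts that $\sum_{\lambda}p_{\lambda}C$ is norm dense in $C$, whereas you derive $\overline{\sum_{\lambda}J_{\lambda}}=C$ honestly, using separate $\sigma$-weak continuity of multiplication together with the identity $J=C\cap J^{\ast\ast}$ for closed ideals; this fills in a point the paper glosses over, and your $B(H)$ example correctly identifies why some such bidual argument is unavoidable. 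Second --- and this is the genuine divergence --- your endgame for the type I case runs through irreducible representations and the characterization ``type I $\iff$ every irreducible representation contains the compacts,'' i.e., the Glimm--Sakai equivalence of type I with GCR; since the lemma carries no separability hypothesis, you are invoking Sakai's non-separable version, a far deeper input than anything the paper uses. The paper instead takes an arbitrary nondegenerate \emph{factor} representation $T$, notes that $T(J_{\lambda})^{\prime\prime}$ is a nonzero weakly closed ideal of the factor $T(C)^{\prime\prime}$ and hence equals it, so that $T$ has the same type as $T|_{J_{\lambda}}$, which is type I because $J_{\lambda}$ is; this stays entirely at the level of the definition via factor representations and is elementary modulo Morita invariance. (For the liminal case your argument and the paper's amount to the same definitional check, since there the restriction of an irreducible $T$ to each ideal is zero or irreducible and no deep theorem enters.) So your proof stands as written, but if you want it self-contained at the paper's level, replace the irreducible-representation step in the type I case by the factor-representation argument; what your route buys in exchange is a cleaner, fully justified density statement and a uniform treatment of both properties through the single dense ideal $\overline{\sum_{\lambda}J_{\lambda}}$.
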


\begin{proof}
Assume that $C$ is type I (respectively liminal). Then $S_{\lambda}$ are
type I (respectively liminal) as C*-subalgebras of a type I (liminal)
C*-algebra.

Assume now that all $S_{\lambda}$ are type I (liminal). Let $T$ be a
nondegenerate factor representation (respectively an irreducible
representation) of $C$. Since, by assumption, $\sum p_{\lambda}=1$ it
follows that $\sum p_{\lambda}C$ is norm dense in $C$. Therefore, there is a 
$\lambda$ such that the restriction of $T$ to $p_{\lambda}C,$ $%
T|_{p_{\lambda}C}\neq0$. Then $T|_{J_{\lambda}}\neq0$, where $J_{\lambda}=%
\overline{Cp_{\lambda}C}$ is the two sided ideal of $C$ generated by $%
p_{\lambda}$. Since $T$ is a factor representation of $C$ (respectively an
irreducible representation of $C$) and the bicommutant $T(J_{\lambda})^{^{%
\prime\prime}}$ is a nonzero weakly closed ideal of $T(C)^{\prime\prime}$ it
follows that $T(J_{\lambda})^{^{\prime\prime}}=T(C)^{\prime\prime}$.
Therefore $T$ has the same type with $T|_{J_{\lambda}}$. On the other hand,
it can be checked that $J_{\lambda}$ is strongly Morita equivalent with $%
S_{\lambda}$ in the sense of Rieffel, \cite{rieffel}, with imprimitivity
bimodule $Cp_{\lambda}$. Therefore, since $S_{\lambda}$ is assumed to be
type I (respectively liminal), it follows from the discussion in \cite%
{rieffel} (respectively \cite{fell}) that $J_{\lambda}$ is type I
(respectively liminal). It then follows that the representation $T$ is a
type I representation (respectively $T(C)$ consists of compact operators).
Since $T$ was arbitrary, we are done.
\end{proof}

We will state next some consequences of the above Lemma.

\begin{theorem}
\label{typeIlocallycompact}Let $(A,G,\alpha)$ be a C*-dynamical system with $G$ a locally compact group and let $K\subset G$ be a compact subgroup. Then
the following conditions are equivalent:\newline i) $A\times_{\alpha}G$ is type I (respectively liminal)\newline ii) The hereditary C*-subalgebras
$S_{\pi}\subset A\times_{\alpha}G$, $\pi\in\widehat{K}$ are type I (respectively liminal)\newline iii) The C*-subalgebras of $K-$central
elements, $\mathcal{I}_{\pi}\subset S_{\pi},\pi\in\widehat{K}$ are type I (respectively liminal).
\end{theorem}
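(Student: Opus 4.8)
The plan is to obtain this theorem as a direct application of the general Lemma \ref{typeIlemma}, feeding it the structural facts about the projections $\chi_{\pi}$ and the algebras $S_{\pi}$, $\mathcal{I}_{\pi}$ assembled in Section 1. I would establish the chain (i) $\Leftrightarrow$ (ii) $\Leftrightarrow$ (iii) by treating the two equivalences separately.

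First, for (i) $\Leftrightarrow$ (ii), I would apply Lemma \ref{typeIlemma} with $C=A\times_{\alpha}G$ and with the family $\{p_{\lambda}\}=\{\chi_{\pi}\}_{\pi\in\widehat{K}}\subset M(A\times_{\alpha}G)$. The hypotheses of the Lemma are supplied verbatim by Remark \ref{Lemma2.5JFA}: the $\chi_{\pi}$ are mutually orthogonal projections in the multiplier algebra, and $\sum_{\pi}\chi_{\pi}=I$ in the bidual $(A\times_{\alpha}G)^{\star\star}$. The hereditary subalgebras attached to these projections are exactly the $S_{\pi}=\overline{\chi_{\pi}(A\times_{\alpha}G)\chi_{\pi}}$ (here $\chi_{\pi}(A\times_{\alpha}G)\chi_{\pi}$ is already norm closed, since for a projection $p\in M(C)$ any norm limit of elements $pcp$ again satisfies $x=pxp$ with $x\in C$). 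Lemma \ref{typeIlemma} then yields at once that $A\times_{\alpha}G$ is type I (respectively liminal) if and only if every $S_{\pi}$ is type I (respectively liminal), which is precisely (i) $\Leftrightarrow$ (ii).

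Next, for (ii) $\Leftrightarrow$ (iii), I would invoke Remark \ref{Cor 2.8JFA}, which asserts that $S_{\pi}$ is type I (respectively liminal) if and only if $\mathcal{I}_{\pi}$ is. This itself rests on the $\ast$-isomorphism $S_{\pi}\cong\mathcal{I}_{\pi}\otimes B(H_{\pi})$ from Remark \ref{JFA2.7}, together with the observation that, because $K$ is compact and $\pi\in\widehat{K}$, the space $H_{\pi}$ is finite dimensional, so $B(H_{\pi})$ is a full matrix algebra and passing to this matrix amplification preserves both type I and liminality. Chaining this with the previous paragraph gives the full equivalence.

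I expect no substantive obstacle here: the statement is essentially a repackaging of Lemma \ref{typeIlemma} and the structural Remarks of Section 1. The only point demanding a moment's care is matching the abstract hypotheses of the Lemma to the crossed-product setting, in particular checking that the $\chi_{\pi}$ genuinely lie in $M(A\times_{\alpha}G)$ and that their sum is the identity of the \emph{bidual} rather than of the multiplier algebra itself; but this is exactly the content of Remark \ref{Lemma2.5JFA}, so the proof reduces to citing the correct results and aligning notation.
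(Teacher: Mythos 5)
Your proposal is correct and follows exactly the paper's own route: the authors prove this theorem by citing Remark \ref{Lemma2.5JFA} (the $\chi_{\pi}$ are mutually orthogonal projections in $M(A\times_{\alpha}G)$ summing to $I$ in the bidual) together with Lemma \ref{typeIlemma} for (i)$\Leftrightarrow$(ii), and Remark \ref{Cor 2.8JFA} (resting on $S_{\pi}\cong\mathcal{I}_{\pi}\otimes B(H_{\pi})$ from Remark \ref{JFA2.7}) for (ii)$\Leftrightarrow$(iii). Your additional observations --- that $\chi_{\pi}(A\times_{\alpha}G)\chi_{\pi}$ is automatically norm closed and that $\dim H_{\pi}<\infty$ makes the matrix amplification harmless --- are correct refinements of details the paper leaves implicit.
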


\begin{proof}
The equivalence of the conditions i)-iii) follows from Remarks \ref%
{Lemma2.5JFA} and \ref{Cor 2.8JFA} and Lemma \ref{typeIlemma}.
\end{proof}

If $G=K$ is a compact group, then the conditions i)-iii) in the above
theorem are equivalent with:

iv) The fixed point algebra $A^{\alpha}$ is type I (respectively liminal) [%
\cite{gootman}, Theorem 3.2].

We will prove next an analogous result for compact quantum group actions. In
[\cite{boca}, Theorem19] it is shown that the crossed product of a
C*-algebra by an ergodic action of a compact quantum group is a direct sum
of full algebras of compact operators, hence a liminal C*-algebra. Since, in
the ergodic case, $\mathcal{S}_{\pi}$ are finite dimensional, the next
result is an extension of Boca's result to the case of general compact
quantum group actions.

For compact quantum groups we have the following result:

\begin{theorem}
\label{typeIquantum}Let $\mathcal{G=(}B,\Delta)$ be a compact quantum group and $\delta$ an action of $\mathcal{G}$ on a C*-algebra $A$. The following
conditions are equivalent:\newline i) $A\times_{\delta}\mathcal{G}$ is type I (respectively liminal)\newline ii) The hereditary C*-subalgebras
$\mathcal{S}_{\pi}\subset A\times_{\delta}\mathcal{G}$, $\pi\in\widehat{\mathcal{G}}$, are type I (respectively liminal)\newline iii) The
C*-subalgebras $\mathcal{I}_{\pi}\subset\mathcal{S}_{\pi},\pi\in \widehat{\mathcal{G}}$, are type I (respectively liminal).\newline iv) The
C*-algebras $A\otimes B(H_{\pi})^{\delta_{\pi}}$, $\pi\in\widehat{\mathcal{G}}$ are type I (respectively liminal).
\end{theorem}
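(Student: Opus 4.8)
The plan is to reduce Theorem \ref{typeIquantum} entirely to the general Lemma \ref{typeIlemma}, exactly as Theorem \ref{typeIlocallycompact} was obtained in the group case. The structural ingredients are already assembled in the preliminaries: by Remark \ref{ralucaanalogsof2.2,2.8andlandstadlemma}(i) the family $\{p_{\pi}\}_{\pi\in\widehat{\mathcal{G}}}$ consists of mutually orthogonal projections in $M(A\times_{\delta}\mathcal{G})$ summing to $1$ in the bidual $(A\times_{\delta}\mathcal{G})^{\star\star}$, and the hereditary subalgebras are precisely $\mathcal{S}_{\pi}=\overline{p_{\pi}(A\times_{\delta}\mathcal{G})p_{\pi}}$. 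This is exactly the hypothesis of Lemma \ref{typeIlemma} applied to $C=A\times_{\delta}\mathcal{G}$ with $\{p_{\lambda}\}=\{p_{\pi}\}$.

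First I would invoke Lemma \ref{typeIlemma} directly to obtain the equivalence of (i) and (ii): $A\times_{\delta}\mathcal{G}$ is type I (respectively liminal) if and only if every hereditary subalgebra $\mathcal{S}_{\pi}$ is type I (respectively liminal). Next, to pass between (ii) and (iii), I would use Remark \ref{ralucaanalogsof2.2,2.8andlandstadlemma}(ii), which gives a $\ast$-isomorphism $\mathcal{S}_{\pi}\cong\mathcal{I}_{\pi}\otimes B(H_{\pi})$. Since tensoring with the finite-dimensional full matrix algebra $B(H_{\pi})$ preserves and reflects both the type I and the liminal properties (a C*-algebra $D$ is type I, respectively liminal, if and only if $D\otimes M_{n}$ is), the equivalence of (ii) and (iii) is immediate. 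Finally, the equivalence of (iii) and (iv) is just Remark \ref{ralucaanalogsof2.2,2.8andlandstadlemma}(iii), which identifies $\mathcal{I}_{\pi}$ with $(A\otimes B(H_{\pi}))^{\delta_{\pi}}$ up to $\ast$-isomorphism; the two fixed-point descriptions are literally the same algebra, so they share every isomorphism-invariant property.

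Since all the heavy lifting has been packaged into the preliminary remarks and the abstract Lemma \ref{typeIlemma}, there is no genuine obstacle remaining in this theorem: the proof is a bookkeeping chain (i)$\Leftrightarrow$(ii)$\Leftrightarrow$(iii)$\Leftrightarrow$(iv) citing Remark \ref{ralucaanalogsof2.2,2.8andlandstadlemma} and Lemma \ref{typeIlemma}. The one point deserving a word of justification is the stability of type I and liminality under tensoring with $B(H_{\pi})$; I would note that this follows because $B(H_{\pi})$ is finite-dimensional (hence nuclear, type I and liminal), and the type I and liminal classes are closed under stable isomorphism, which is exactly what Lemma \ref{typeIlemma}'s proof already exploited via strong Morita equivalence. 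If I wanted to emphasize uniformity with the group case, I would simply remark that the argument is verbatim that of Theorem \ref{typeIlocallycompact}, with Remark \ref{ralucaanalogsof2.2,2.8andlandstadlemma} playing the role that Remarks \ref{Lemma2.5JFA} and \ref{Cor 2.8JFA} played there.
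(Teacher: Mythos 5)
Your proposal is correct and follows essentially the same route as the paper, whose entire proof is the one-line citation of Remark \ref{ralucaanalogsof2.2,2.8andlandstadlemma} together with Lemma \ref{typeIlemma}. You have merely spelled out the routine details the paper leaves implicit (applying the lemma to $C=A\times_{\delta}\mathcal{G}$ with the projections $p_{\pi}$, and noting that type I and liminality pass across the $\ast$-isomorphism $\mathcal{S}_{\pi}\cong\mathcal{I}_{\pi}\otimes B(H_{\pi})$ with $B(H_{\pi})$ finite-dimensional), all of which is sound.
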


\begin{proof}
The result follows from Remark \ref{ralucaanalogsof2.2,2.8andlandstadlemma}
and Lemma \ref{typeIlemma}.
\end{proof}

\subsection{Nuclear crossed products}

\ 

We start with the following lemma which is certainly known but we could not
find a reference for it:

\begin{lemma}
\label{prepLemma}A C*-algebra $C$ is nuclear if and only if for every state $\varphi$ of $C$, $T_{\varphi}(C)^{\prime\prime}$ is an injective von Neumann
algebra, where $T_{\varphi}$ is the GNS representation of $C$ associated with $\varphi$.
\end{lemma}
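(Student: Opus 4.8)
The plan is to work directly from the definition of nuclearity adopted in the paper, namely that $C$ is nuclear if and only if its bidual $C^{**}$ is injective, and to translate the condition ``$T_{\varphi}(C)^{\prime\prime}$ injective for every state $\varphi$'' into a statement about central summands of $C^{**}$. The bridge is the standard structure theory of the universal enveloping von Neumann algebra: for each state $\varphi$ the GNS representation $T_{\varphi}$ extends to a normal $\ast$-homomorphism $\widetilde{T}_{\varphi}\colon C^{**}\to T_{\varphi}(C)^{\prime\prime}$ whose kernel is the complement of a central projection $z_{\varphi}\in C^{**}$ (the central cover of $T_{\varphi}$), so that $\widetilde{T}_{\varphi}$ restricts to a normal $\ast$-isomorphism $C^{**}z_{\varphi}\cong T_{\varphi}(C)^{\prime\prime}$. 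I would first record this fact together with the two permanence properties of injectivity on which the argument rests: a corner $pMp$ (in particular a central summand $Mz$) of an injective von Neumann algebra $M$ is again injective, and an $\ell^{\infty}$-product $\prod_{i}M_{i}$ of injective von Neumann algebras is injective (compress $B(\bigoplus_{i}H_{i})$ onto its block-diagonal and then apply the product of the individual norm-one projections). Granting these, the forward implication is immediate: if $C^{**}$ is injective, then each central summand $C^{**}z_{\varphi}\cong T_{\varphi}(C)^{\prime\prime}$ is injective.

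For the converse I would assemble the local injectivity of these corners into global injectivity of $C^{**}$ by decomposing its identity into central covers of single states. Using Zorn's lemma, choose a maximal family of states $\{\varphi_{i}\}_{i\in I}$ whose central covers $z_{i}=z_{\varphi_{i}}$ are pairwise orthogonal. If $z=\sup_{i}z_{i}\neq 1$, then any normal state of the nonzero algebra $C^{**}(1-z)$ restricts to a state of $C$ whose central cover is a nonzero projection dominated by the central projection $1-z$, hence orthogonal to every $z_{i}$, contradicting maximality. Thus the $z_{i}$ are mutually orthogonal central projections with $\sup_{i}z_{i}=1$, which exhibits $C^{**}$ as the $\ell^{\infty}$-direct sum $\prod_{i}C^{**}z_{i}$. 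Each summand $C^{**}z_{i}\cong T_{\varphi_{i}}(C)^{\prime\prime}$ is injective by hypothesis, so by the product permanence property $C^{**}$ is injective, and therefore $C$ is nuclear.

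I expect the converse to be the main obstacle: the hypothesis only supplies injectivity of the ``cyclic'' corners $C^{**}z_{\varphi}$ attached to individual states, and the work lies in organizing these into a genuine orthogonal central decomposition of the entire bidual. The maximality argument forcing $\sup_{i}z_{i}=1$ is the delicate point, since it is precisely what allows the local data to patch together; once the orthogonal decomposition is in hand, stability of injectivity under $\ell^{\infty}$-products finishes the proof.
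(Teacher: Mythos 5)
Your proposal is correct and follows essentially the same route as the paper: the paper chooses a maximal family of states whose GNS representations are pairwise \emph{disjoint} (which is exactly the orthogonality of your central covers $z_{\varphi}$), identifies $C^{**}$ with the direct sum $\oplus_{\iota} T_{\varphi_{\iota}}(C)^{\prime\prime}$, and invokes Effros--Lance for the injectivity of that sum. The only difference is cosmetic: you verify by hand the two points the paper cites or asserts, namely that maximality forces $\sup_{i} z_{i}=1$ and that injectivity passes to $\ell^{\infty}$-products.
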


\begin{proof}
If $C$ is nuclear then $C^{\ast\ast}$ is an injective von Neumann algebra [%
\cite{effros}, Theorem 6.4.]. Therefore, so is $T_{\varphi}(C)^{\prime%
\prime} $ which is isomorphic with an algebra of the form $eC^{\ast\ast}$
for a certain projection, $e\in(C^{\ast\ast})^{\prime}$.

Conversely, if $T_{\varphi}(C)^{\prime\prime}$ is injective for every state $%
\varphi$, let $\left\{ \varphi_{\iota}\right\} $ be a maximal family of
states for which the corresponding cyclic representations $%
T_{\varphi_{\iota}}$ are disjoint. Then $T_{\varphi_{\iota}}$ and $T=\oplus
T_{\varphi_{\iota}}$ can be extended to normal representations $\overline{%
T_{\varphi_{\iota}}\ }$ and $\overline{T\ }$ of $C^{\ast\ast}$ with $%
\overline{T\ }$ a normal isomorphism. Therefore, $C^{\ast\ast}$ is
isomorphic with $\oplus T_{\varphi_{\iota}}(C)^{\prime\prime}$. Since all $%
T_{\varphi_{\iota}}(C)^{\prime\prime}$ are injective (by assumption), from [%
\cite{effros}, Proposition 3.1.], it follows that $C^{\ast\ast}$ is
injective and thus $C$ is nuclear.
\end{proof}

Throughout the rest of this section all algebras, groups and quantum groups
are assumed to be separable. The following Lemma is the analog of Lemma \ref%
{typeIlemma} for the case of nuclear crossed products.

\begin{lemma}
\label{nuclearlemma}Let $C$ be a separable C*-algebra and $\left\{q_{\lambda}\right\}  \subset M(C)$ be a family of mutually orthogonal projections such that $\sum_{\lambda}q_{\lambda}=1$ in $C^{\star\star}$. The following statements are equivalent\newline i) $C$ is nuclear\newline ii) The hereditary C*-subalgebras $S_{\lambda}$ are nuclear for all $\lambda$.
\end{lemma}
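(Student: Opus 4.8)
The plan is to mirror the structure of Lemma \ref{typeIlemma} but to work at the level of biduals and injectivity rather than factor representations, using Lemma \ref{prepLemma} as the bridge between nuclearity and injectivity of GNS bicommutants. First I would dispose of the easy direction: if $C$ is nuclear, then each hereditary subalgebra $S_{\lambda}=q_{\lambda}Cq_{\lambda}$ is nuclear because hereditary C*-subalgebras (indeed arbitrary C*-subalgebras with a conditional-expectation-type structure, but here simply hereditary subalgebras) of a nuclear C*-algebra are nuclear. Concretely, $S_{\lambda}^{\star\star}$ is isomorphic to $q_{\lambda}C^{\star\star}q_{\lambda}$, a corner of the injective von Neumann algebra $C^{\star\star}$ cut by a projection, and corners of injective von Neumann algebras are injective; hence $S_{\lambda}$ is nuclear.

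For the converse, the plan is to invoke Lemma \ref{prepLemma}: it suffices to show that for every state $\varphi$ of $C$, the von Neumann algebra $T_{\varphi}(C)^{\prime\prime}$ is injective. Fix such a $\varphi$ and let $(T_{\varphi},H_{\varphi},\xi_{\varphi})$ be its GNS triple. Since $\sum_{\lambda}q_{\lambda}=1$ in $C^{\star\star}$, the extension $\overline{T_{\varphi}}$ to a normal representation of $C^{\star\star}$ sends the $q_{\lambda}$ to mutually orthogonal projections summing to the identity of $T_{\varphi}(C)^{\prime\prime}$; I would first reduce to a single corner. The key step is to observe that each corner $\overline{T_{\varphi}}(q_{\lambda})\,T_{\varphi}(C)^{\prime\prime}\,\overline{T_{\varphi}}(q_{\lambda})$ is, by construction, a normal representation of $q_{\lambda}C^{\star\star}q_{\lambda}\cong S_{\lambda}^{\star\star}$, and therefore of the form $S_{\lambda}^{\star\star}$ cut by a central projection of it (the image being isomorphic to $e\,S_{\lambda}^{\star\star}$ for some projection $e$ in the center, via the normal extension). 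Since $S_{\lambda}$ is nuclear by hypothesis, $S_{\lambda}^{\star\star}$ is injective by [\cite{effros}, Theorem 6.4.], and so is each such corner, being a reduction of an injective von Neumann algebra.

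It then remains to reassemble these corners into the full algebra $T_{\varphi}(C)^{\prime\prime}$ and conclude injectivity. Here I would use that injectivity is preserved under the relevant direct-sum/amplification operation: $T_{\varphi}(C)^{\prime\prime}$ is spatially decomposed by the orthogonal projections $\overline{T_{\varphi}}(q_{\lambda})$, and I expect to realize it as (a von Neumann subalgebra contained between the diagonal corners and generated by them together with the partial isometries implementing the Morita/imprimitivity structure). The cleanest route is probably to note that since $C$ is separable only countably many $q_{\lambda}$ have nonzero image under $\overline{T_{\varphi}}$, and to appeal to [\cite{effros}, Proposition 3.1.] — exactly as in the proof of Lemma \ref{prepLemma} — to deduce injectivity of the whole from injectivity of the pieces. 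The main obstacle, and the step requiring the most care, will be the reassembly: the corners $q_{\lambda}C^{\star\star}q_{\lambda}$ do not by themselves generate $C^{\star\star}$, so I must control the off-diagonal blocks $q_{\lambda}C^{\star\star}q_{\mu}$, and the honest way to handle this is to exploit that each two-sided ideal $J_{\lambda}=\overline{Cq_{\lambda}C}$ is strongly Morita equivalent to $S_{\lambda}$ (with imprimitivity bimodule $Cq_{\lambda}$, just as in Lemma \ref{typeIlemma}), that Morita equivalence preserves nuclearity, and hence that each $J_{\lambda}$ is nuclear; one then shows the ideal generated by all the $q_{\lambda}$ is dense in $C$ and passes injectivity from the ideals to $C^{\star\star}$ via the normal-extension argument of Lemma \ref{prepLemma}. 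The delicate point is verifying that Morita equivalence indeed transports nuclearity, which is where I would cite the standard invariance of nuclearity under strong Morita equivalence.
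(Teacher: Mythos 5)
Your easy direction is fine, and your first moves on the converse are also on target: reducing to injectivity of $T_{\varphi}(C)^{\prime\prime}$ via Lemma \ref{prepLemma}, identifying $S_{\lambda}^{\star\star}$ with $q_{\lambda}C^{\star\star}q_{\lambda}$, and recognizing that injectivity of the diagonal corners does not give injectivity of $T_{\varphi}(C)^{\prime\prime}$ because the off-diagonal blocks are uncontrolled. The genuine gap is in your final patch. You propose to pass from nuclearity of the ideals $J_{\lambda}=\overline{Cq_{\lambda}C}$ (obtained, as in the paper, from Morita equivalence with $S_{\lambda}$) to injectivity of $C^{\star\star}$ ``via the normal-extension argument of Lemma \ref{prepLemma}'' and [\cite{effros}, Proposition 3.1.]. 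That argument handles \emph{direct sums} of disjoint representations, but the $J_{\lambda}$ are not mutually orthogonal ideals: orthogonality of the projections $q_{\lambda}$ does not make the ideals they generate orthogonal, and indeed one can have $J_{\lambda}=C$ for every $\lambda$ (e.g.\ whenever each $q_{\lambda}$ is a full projection). So $C$ is only the closure of the (highly overlapping) sum $\sum_{\lambda}J_{\lambda}$, a representation of $C$ does not decompose as a direct sum over $\lambda$, and Proposition 3.1 of \cite{effros} simply does not apply. As written, the reassembly step fails at exactly the point you flagged as delicate.

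The missing idea, which is how the paper closes this gap, is a reduction to \emph{factor} representations: by Sakai's central decomposition [\cite{sakai}, Theorem 3.5.2.], $T_{\varphi}(C)^{\prime\prime}$ is a direct integral of the factors $T_{\psi}(C)^{\prime\prime}$ over the central measure of $\varphi$, and by [\cite{connes}, Proposition 6.5.] injectivity of the direct integral is equivalent to injectivity of almost all fibers. For a factor representation $T$ the overlap problem evaporates: some $T|_{q_{\lambda}C}\neq 0$, hence $T|_{J_{\lambda}}\neq 0$, and since $T(J_{\lambda})^{\prime\prime}$ is a nonzero weakly closed ideal of the factor $T(C)^{\prime\prime}$ one gets $T(C)^{\prime\prime}=T(J_{\lambda})^{\prime\prime}$, which is injective because $J_{\lambda}$ is nuclear (here the paper uses separability and [\cite{brown}, Theorem 1.2.] to get stable isomorphism $J_{\lambda}\otimes\mathcal{C}(H)\cong S_{\lambda}\otimes\mathcal{C}(H)$, rather than a bare citation of Morita invariance of nuclearity; note the separability hypothesis in the lemma is there precisely for this). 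Alternatively, your route could be completed without factor representations by the Choi--Effros permanence results: sums of finitely many nuclear ideals are nuclear (nuclearity passes to quotients and is closed under extensions), and the increasing net of finite sums has dense union in $C$, so $C$ is nuclear as a directed union of nuclear ideals --- but that is a different tool, and you invoked neither it nor the direct-integral argument, so the proof as proposed is incomplete.
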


\begin{proof}
Assume first that $C$ is nuclear. Then, by [\cite{choi}, Corollary 3.3 (4)],
every hereditary subalgebra of $C$ is nuclear. Hence $S_{\lambda}$ is
nuclear for every $\lambda$.\newline
Assume now that ii) holds that is : all $S_{\lambda}$ are nuclear
C*-algebras. We will show that for every cyclic representation $T_{\varphi}$
of $C$, $T_{\varphi} (C)^{\prime\prime}$ is injective and the result will
follow from the previous lemma. Let $\varphi$ be a state of $C$. Then, by
reduction theory, $\overline{T_{\varphi}}(C^{\ast\ast})=T_{\varphi}(C)^{%
\prime\prime}$ is the direct integral of factors $\overline{T_{\psi}}%
(C^{\ast\ast})=T_{\psi }(C)^{\prime\prime}$ where $\psi$ are factor states
of $C$, $\overline {T_{\varphi}}(C^{\ast\ast})=\int\overline{T_{\psi}}%
(C^{\ast\ast})d\mu (\psi)$ where $\mu$ is the central measure associated
with the state $\varphi$ and the integral is taken over the state space of $%
C $ [\cite{sakai}, Theorem 3.5.2.]. Applying [\cite{connes}, Proposition
6.5.], it follows that $T_{\varphi}(C)^{\prime\prime }$ is injective if and
only if almost all of the factors $T_{\psi}(C)^{\prime\prime}$ are
injective. We have, therefore, reduced our problem to the following:

Assuming that all hereditary subalgebras $S_{\lambda}$ are nuclear, show
that for every cyclic factor representation $T$ of $C$ we have that $%
T(C)^{\prime\prime}$ is an injective von Neumann algebra.

Let $T$ be a non degenerate cyclic factor representation of $C$. Since $%
\sum_{\lambda}q_{\lambda}=1$ in $C^{\ast\ast}$ there is a $\lambda$ such
that the restriction $T|_{q_{\lambda}C}\neq0$. Hence the restriction of $T$
to the closed two sided ideal $J_{\lambda}=\overline{Cq_{\lambda}C}$ is non
zero. Since $T$ is a factor representation and $J_{\lambda}$ is a two sided
ideal it follows that $T(C)^{\prime\prime}=T(J_{\lambda})^{\prime\prime}$.
We show next that under our assumptions $T(J_{\lambda})^{\prime\prime}$ is
injective and thus $T(C)^{\prime\prime}$ is injective. We noticed above that 
$S_{\lambda}$ is strongly Morita equivalent with $J_{\lambda}$. Since $C$ is
separable, so are $S_{\lambda\text{ }}$ and $J_{\lambda}$. By [\cite{brown},
Theorem 1.2.] $S_{\lambda\text{ }}$ and $J_{\lambda}$ are stably isomorphic.
Since $S_{\lambda\text{ }}$ is nuclear it follows that $J_{\lambda}$ is
nuclear. By Lemma \ref{prepLemma} we have that $T(J_{\lambda})^{\prime%
\prime} $ is injective and the proof is complete.
\end{proof}

From the proof of the previous lemma it follows:

\begin{corollary}
A separable C*-algebra $C$ is nuclear if and only if for every factor state $\psi$ of $C$, $T_{\psi}(C)^{\prime\prime}$ is an injective von Neumann algebra, where $T_{\psi}$ is the GNS representation of $C$ associated with $\psi$.
\end{corollary}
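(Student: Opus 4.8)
The plan is to extract both implications directly from the ingredients already assembled in the proof of Lemma \ref{nuclearlemma}, so that essentially no new machinery is required; this is why the statement is phrased as a corollary of that proof rather than of the lemma itself.

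For the forward direction, suppose $C$ is nuclear. Since every factor state is in particular a state, Lemma \ref{prepLemma} immediately gives that $T_{\psi}(C)^{\prime\prime}$ is injective for every factor state $\psi$, and there is nothing further to check.

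For the converse, assume that $T_{\psi}(C)^{\prime\prime}$ is injective for every factor state $\psi$. By Lemma \ref{prepLemma} it suffices to show that $T_{\varphi}(C)^{\prime\prime}$ is injective for an \emph{arbitrary} state $\varphi$ of $C$. Here I would invoke reduction theory exactly as in the proof of Lemma \ref{nuclearlemma}: since $C$ is separable, the central measure $\mu$ associated with $\varphi$ disintegrates $T_{\varphi}(C)^{\prime\prime}$ as a direct integral $\int T_{\psi}(C)^{\prime\prime}\,d\mu(\psi)$ of factors, where $\mu$ is supported on the factor states of $C$ [\cite{sakai}, Theorem 3.5.2.]. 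By hypothesis every such $T_{\psi}(C)^{\prime\prime}$ is injective, hence in particular $\mu$-almost all of them are; Connes' criterion [\cite{connes}, Proposition 6.5.] then yields that the direct integral $T_{\varphi}(C)^{\prime\prime}$ is injective. Applying Lemma \ref{prepLemma} once more, $C$ is nuclear.

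I expect no serious obstacle, as this is a genuine corollary of the preceding argument. The only points that need care are that the central measure is concentrated on the factor states, so that the blanket hypothesis ``$T_{\psi}(C)^{\prime\prime}$ injective for all factor states $\psi$'' is precisely what is required to feed Connes' Proposition 6.5, and that the standing separability assumption of this section is what legitimizes the direct integral decomposition.
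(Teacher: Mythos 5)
Your proposal is correct and coincides with the paper's intended argument: the paper derives this corollary precisely ``from the proof of the previous lemma,'' i.e., by combining Lemma \ref{prepLemma} with the central-measure disintegration $\overline{T_{\varphi}}(C^{\ast\ast})=\int\overline{T_{\psi}}(C^{\ast\ast})\,d\mu(\psi)$ of [\cite{sakai}, Theorem 3.5.2.] and Connes' criterion [\cite{connes}, Proposition 6.5.], exactly as you do. You have also correctly identified the two points of care (the central measure being concentrated on factor states, and separability legitimizing the direct integral), so nothing is missing.
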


We can now state our main results of this section.

\begin{theorem}
\label{nuclearlocallycompact}Let $(A,G,\alpha)$ be a C*-dynamical system with $G$ a locally compact group and let $K\subset G$ be a compact subgroup. Then
the following conditions are equivalent:\newline i) $A\times_{\alpha}G$ is a nuclear C*-algebra\newline ii) The hereditary C*-subalgebras $S_{\pi}\subset
A\times_{\alpha}G$, $\pi\in\widehat{K}$ are nuclear\newline iii) The C*-subalgebras of $K-$central elements, $\mathcal{I}_{\pi}\subset S_{\pi}
,\pi\in\widehat{K}$ are nuclear\newline Furthermore, any of the previous three equivalent conditions implies\newline iv) $A$ is nuclear\newline In addition,
if $G$ is amenable, i.e if the group C*-algebra $C^{\ast}(G)$ is nuclear the conditions i)-iv) are equivalent.
\end{theorem}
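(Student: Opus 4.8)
The plan is to establish the equivalences i)--iii) first, and then handle iv) and the amenability addendum separately.

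For the equivalence of i), ii), and iii), I would simply invoke the machinery already assembled. Remark \ref{Lemma2.5JFA} guarantees that the projections $\left\{ \chi_{\pi} \right\}_{\pi \in \widehat{K}}$ are mutually orthogonal in $M(A\times_{\alpha}G)$ with $\sum_{\pi} \chi_{\pi} = I$ in the bidual. This is precisely the hypothesis needed to apply Lemma \ref{nuclearlemma} with $C = A\times_{\alpha}G$ and $q_{\lambda} = \chi_{\pi}$, giving the equivalence of i) and ii). The equivalence of ii) and iii) is then immediate from Remark \ref{Cor 2.8JFA} i), which asserts that $S_{\pi}$ is nuclear precisely when $\mathcal{I}_{\pi}$ is nuclear (this rests in turn on the $\ast$-isomorphism $S_{\pi} \cong \mathcal{I}_{\pi} \otimes B(H_{\pi})$ of Remark \ref{JFA2.7} and the fact that tensoring with the nuclear C*-algebra $B(H_{\pi})$ preserves nuclearity). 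I would note that the separability hypothesis imposed at the start of this subsection is what licenses the use of Lemma \ref{nuclearlemma}.

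Next I would prove that i)--iii) imply iv). The cleanest route is to realize $A$ as a corner, or more precisely to use that nuclearity passes to C*-subalgebras that are well-placed inside the crossed product. The standard fact here is that if $A\times_{\alpha}G$ is nuclear then so is the group C*-algebra $C^{\ast}(G)$ and so is $A$ itself; indeed $A$ embeds into $M(A\times_{\alpha}G)$ and, more usefully, nuclearity of a crossed product forces nuclearity of the coefficient algebra since $A$ appears as the image of a conditional-expectation-type compression, or because any quotient/subalgebra relation in the relevant sense transfers nuclearity. I expect this to be the main obstacle: one must be careful about exactly which inclusion or quotient is being used, since nuclearity does not pass to arbitrary subalgebras. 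I would argue that $A$ is a (nondegenerate) C*-subalgebra of $M(A\times_{\alpha}G)$ and invoke the known permanence result that nuclearity of $A\times_{\alpha}G$ implies nuclearity of $A$ --- this is a folklore consequence of the fact that the canonical embedding admits a suitable completely positive left inverse at the level of the reduced crossed product, or one cites the standard reference (e.g.\ the Brown--Ozawa text) that a full crossed product being nuclear forces the underlying algebra to be nuclear.

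Finally, for the converse under amenability, I would assume $G$ amenable, equivalently $C^{\ast}(G)$ nuclear, and show iv) implies i). Here the key permanence theorem is that if $A$ is nuclear and $G$ is amenable, then $A\times_{\alpha}G$ is nuclear; this is a classical result, since for amenable $G$ the full and reduced crossed products coincide and the crossed product of a nuclear algebra by an amenable group is nuclear. Combined with the already-established chain i)$\Rightarrow$iv), this closes the loop and yields the equivalence of i)--iv) in the amenable case. The separability assumptions present no difficulty here, so the whole converse reduces to citing the standard crossed-product permanence property for nuclearity under amenable actions.
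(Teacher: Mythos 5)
Your handling of i)$\Leftrightarrow$ii)$\Leftrightarrow$iii) coincides exactly with the paper's proof: Remark \ref{Lemma2.5JFA} supplies the mutually orthogonal family $\{\chi_{\pi}\}_{\pi\in\widehat{K}}$ with $\sum_{\pi}\chi_{\pi}=I$ in the bidual, Lemma \ref{nuclearlemma} (correctly flagged by you as the place where separability is used) gives i)$\Leftrightarrow$ii), and Remark \ref{Cor 2.8JFA} gives ii)$\Leftrightarrow$iii). Your amenable converse is also the paper's argument: for $G$ amenable and $A$ nuclear, $A\times_{\alpha}G$ is nuclear by [\cite{green}, Proposition 14], which together with i)$\Rightarrow$iv) closes the loop.

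The genuine gap is in i)$\Rightarrow$iv), and it is precisely the step the authors single out as the nontrivial one. The mechanisms you propose do not exist for a general locally compact $G$: the algebra $A$ embeds in $M(A\times_{\alpha}G)$, not in the crossed product itself, and nuclearity passes neither to subalgebras nor through multiplier algebras; more seriously, for non-discrete $G$ there is no conditional expectation, nor any completely positive left inverse of the canonical embedding, from $A\times_{\alpha}G$ (or the reduced crossed product) onto $A$ --- the compression/expectation device you invoke, and the Brown--Ozawa-type permanence results you cite, are available only for discrete groups. The paper's actual argument is a duality argument and is the real content of this implication: since every locally compact group is co-amenable, Raeburn's theorem [\cite{raeburn}, Theorem 4.6] shows that the crossed product by the dual coaction, $A\times_{\alpha}G\times_{\widehat{\alpha}}\widehat{G}$, is nuclear whenever $A\times_{\alpha}G$ is; by biduality this double crossed product is isomorphic to $A\otimes\mathcal{C}(\mathcal{H})$ with $\mathcal{C}(\mathcal{H})$ the compact operators on a suitable Hilbert space, and nuclearity of $A\otimes\mathcal{C}(\mathcal{H})$ forces nuclearity of $A$. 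As written, your version of i)$\Rightarrow$iv) reduces to an unsupported folklore appeal whose suggested justifications fail outside the discrete case, so you need to replace it by the coaction-duality argument (or an equivalent citation valid for arbitrary locally compact $G$).
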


\begin{proof}
The equivalence of the conditions i)-iii) follows from Remarks \ref%
{Lemma2.5JFA} and \ref{Cor 2.8JFA} and Lemma \ref{nuclearlemma}. On the
other hand, if the crossed product, $A\times_{\alpha}G$ is nuclear, then,
applying [\cite{raeburn}, Theorem 4.6.], it follows that $%
A\times_{\alpha}G\times_{\widehat{\alpha}}\widehat{G}$ is nuclear, where $%
\widehat{\alpha}$ is the dual coaction. Since by biduality this latter
crossed product is isomorphic with $A\otimes\mathcal{C(H)}$ where $\mathcal{%
C(H)}$ is the C*-algebra of compact operators on a certain Hilbert space, $%
\mathcal{H}$, it follows that $A$ is a nuclear C*-algebra. Finally, if $G$
is amenable and $A$ is nuclear, then by [\cite{green}, Proposition 14], the
crossed product $A\times_{\alpha}G$ is nuclear and therefore in this case iv)%
$\implies$i).
\end{proof}

In the proof of the implication i)$\mathcal{\Longrightarrow }$iv) of the
above theorem we have used the fact that every locally compact group is
co-amenable and Raeburn's result. The next result is the analog of the
previous one for the case of compact quantum groups. A compact quantum
group, $\mathcal{G}=(B,\Delta )$, is automatically amenable since $\widehat{B%
}$ is a subalgebra of compact operators, but not co-amenable, in general,
since $B$ is not necessarily nuclear.

We will state next the corresponding result for compact quantum group
actions.

\begin{theorem}
Let $(A,\mathcal{G},\delta)$ be a quantum C*-dynamical system with $\mathcal{G=(}B,\Delta)$ a compact quantum group. The following three
conditions are equivalent:\newline i) $A\times_{\alpha}\mathcal{G}$ is nuclear\newline ii) The hereditary C*-subalgebras $S_{\pi}\subset
A\times_{\alpha}\mathcal{G}$ are nuclear\newline iii) The C*-algebras $(A\otimes B(H_{\pi}))^{\delta_{\pi}}$ are nuclear.\newline Furthermore, each
of the above condition is implied by \newline iv) $A$ is a nuclear C*-algebra.\newline In addition, if the quantum group $\mathcal{G}$ is
co-amenable, i.e. if $B$ is a nuclear C*-algebra, then the conditions i)-iv) are equivalent with the following:\newline v) $A^{\delta}$ is nuclear.
\end{theorem}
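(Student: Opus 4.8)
The plan is to follow the proof of Theorem \ref{nuclearlocallycompact} almost verbatim, substituting the quantum data of Remark \ref{ralucaanalogsof2.2,2.8andlandstadlemma} and Remark \ref{followsfromRalRemark3.5} for the group-theoretic data, and to pinpoint the one place where a compact quantum group differs from a group: here amenability of $\mathcal{G}$ is automatic, but co-amenability is not. For the equivalence i)$\Longleftrightarrow$ii)$\Longleftrightarrow$iii) I would argue exactly as there. Apply Lemma \ref{nuclearlemma} to the separable C*-algebra $C=A\times_{\delta}\mathcal{G}$ with the family $\{p_{\pi}\}_{\pi\in\widehat{\mathcal{G}}}\subset M(C)$, which by Remark \ref{ralucaanalogsof2.2,2.8andlandstadlemma} (i) consists of mutually orthogonal projections with $\sum_{\pi}p_{\pi}=1$ in $C^{\ast\ast}$; since the hereditary subalgebras $p_{\pi}Cp_{\pi}$ are exactly the $\mathcal{S}_{\pi}$, this gives i)$\Longleftrightarrow$ii), and Remark \ref{followsfromRalRemark3.5} (i) gives ii)$\Longleftrightarrow$iii).

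The implication iv)$\implies$i) is where compactness helps: no amenability hypothesis is needed, and I would obtain it by proving iv)$\implies$iii) directly. If $A$ is nuclear then $A\otimes B(H_{\pi})=M_{d_{\pi}}(A)$ is nuclear, and, as recorded in Section 1, the Haar-averaging map $(\iota\otimes h)\delta_{\pi}$ is a norm-one completely positive projection, hence a conditional expectation $E$ of $A\otimes B(H_{\pi})$ onto $(A\otimes B(H_{\pi}))^{\delta_{\pi}}$. Nuclearity is inherited by the range of a conditional expectation: if $\phi_{n}\colon M_{d_{\pi}}(A)\to M_{k_{n}}$ and $\psi_{n}\colon M_{k_{n}}\to M_{d_{\pi}}(A)$ are completely positive maps with $\psi_{n}\phi_{n}\to\mathrm{id}$ pointwise in norm, then the restriction of $\phi_{n}$ together with $E\psi_{n}$ factor the identity of $(A\otimes B(H_{\pi}))^{\delta_{\pi}}$ through matrix algebras. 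Thus iii) holds and, by the previous paragraph, so do i) and ii). The essential point is that only the Haar conditional expectation is used, and this exists precisely because $\mathcal{G}$ is compact; this is the concrete content of the remark that $\mathcal{G}$ is automatically amenable.

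Under co-amenability I would close the circle. For i)$\implies$iv) I would imitate Theorem \ref{nuclearlocallycompact} using Baaj--Skandalis biduality $(A\times_{\delta}\mathcal{G})\times_{\widehat{\delta}}\widehat{\mathcal{G}}\cong A\otimes\mathcal{C}(H_{h})$: co-amenability of $\mathcal{G}$ is equivalent to amenability of the discrete dual $\widehat{\mathcal{G}}$, so the crossed product by $\widehat{\mathcal{G}}$ preserves nuclearity (the quantum analog of the Raeburn result used in the group case), whence $A\otimes\mathcal{C}(H_{h})$, and therefore $A$, is nuclear; together with iv)$\implies$i) this makes i)--iv) equivalent. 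Since v) is the instance $\pi=\pi_{0}$ of iii) (equivalently, $A^{\delta}\cong\mathcal{S}_{\pi_{0}}$ is a hereditary subalgebra of the nuclear algebra $A\times_{\delta}\mathcal{G}$ by Remark \ref{ralucaanalogsof2.2,2.8andlandstadlemma} (ii),(iii) at the trivial representation), iv)$\implies$v) is immediate, and only v)$\implies$iv) remains.

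I expect v)$\implies$iv), the recovery of nuclearity of $A$ from that of $A^{\delta}$, to be the main obstacle, and it is exactly here that co-amenability must be used essentially: for $A=B$ with $\delta=\Delta$ one has $A^{\delta}=\mathbb{C}1$ nuclear while $A=B$ is nuclear only when $\mathcal{G}$ is co-amenable. My plan is a Fej\'er-type approximation: amenability of $\widehat{\mathcal{G}}$ provides a net of finitely supported completely positive multipliers $T_{i}\to\mathrm{id}_{A}$ in the point-norm topology, each $T_{i}$ of finite spectral support, so that $T_{i}(A)$ lies in a finitely generated projective Hilbert $A^{\delta}$-module; such a module embeds with a completely positive splitting into a matrix algebra over the nuclear algebra $A^{\delta}$, and composing the $T_{i}$ with completely positive approximations of that matrix algebra yields finite-rank completely positive approximations of $\mathrm{id}_{A}$. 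The two technical points to be checked are the construction of the finitely supported completely positive Fej\'er multipliers out of amenability of $\widehat{\mathcal{G}}$, and the completely positive embedding of the finite-spectral-support subspaces into matrices over $A^{\delta}$.
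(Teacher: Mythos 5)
Your handling of the equivalences i)$\Longleftrightarrow$ii)$\Longleftrightarrow$iii) and of the implication iv)$\Longrightarrow$iii) coincides with the paper's proof: Lemma \ref{nuclearlemma} applied to $C=A\times_{\delta}\mathcal{G}$ with the projections $\{p_{\pi}\}$ of Remark \ref{ralucaanalogsof2.2,2.8andlandstadlemma}, and the Haar conditional expectation onto $(A\otimes B(H_{\pi}))^{\delta_{\pi}}$; where the paper cites [\cite{choi}, Corollary 3.4.~(4)], you write out the standard composition $E\psi_{n}\phi_{n}\to\mathrm{id}$ of completely positive approximations, which is a correct substitute.

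The divergence, and the genuine gap, is in the co-amenable part. The paper does not prove v)$\Longrightarrow$iv) from scratch: it invokes [\cite{doplicher}, Corollary 7], whose hypothesis is exactly the paper's definition of co-amenability (nuclearity of $B$), to obtain iv)$\Longleftrightarrow$v), and then closes the circle via iii)$\Longrightarrow$v), since the $\pi_{0}$ instance of iii) is precisely nuclearity of $A^{\delta}$ (equivalently $\mathcal{S}_{\pi_{0}}\cong A^{\delta}$). You replace this citation with two sketches, each incomplete. First, your route to i)$\Longrightarrow$iv) via Baaj--Skandalis biduality needs an unproven quantum analog of Raeburn's theorem (that the crossed product by the dual action of $\widehat{\mathcal{G}}$ preserves nuclearity when $\widehat{\mathcal{G}}$ is amenable); note this implication is in any case logically redundant, since iii)$\Longrightarrow$v)$\Longrightarrow$iv)$\Longrightarrow$iii) already closes the loop once v)$\Longrightarrow$iv) is in hand. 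Second, and more seriously, your Fej\'er-multiplier argument for v)$\Longrightarrow$iv) rests on (a) the identification of the paper's co-amenability (nuclearity of $B$) with amenability of the discrete dual $\widehat{\mathcal{G}}$ --- but nuclearity of $B$ is a priori the weaker property, and no such equivalence is available from the paper's toolkit, so the finitely supported completely positive multipliers $T_{i}\to\mathrm{id}_{A}$ cannot be constructed from the stated hypothesis --- and (b) the claim that the finite-spectral-support subspaces are finitely generated projective Hilbert $A^{\delta}$-modules admitting completely positive split embeddings into matrix algebras over $A^{\delta}$, which is nontrivial (and delicate when $A$ is non-unital) and is essentially the content of the Doplicher--Longo--Roberts--Zsido result you would be re-deriving. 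As written, v)$\Longrightarrow$iv) --- which you yourself flag as the main obstacle --- remains a plan rather than a proof; the paper's argument is complete at exactly that point because it outsources the step to \cite{doplicher}.
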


\begin{proof}
The equivalence of i)-iii) follows from Lemma \ref%
{ralucaanalogsof2.2,2.8andlandstadlemma} and Lemma \ref{nuclearlemma}. We
now prove that iv) implies iii). Let $\pi\in\widehat{\mathcal{G}}$. If $A$
is nuclear, then $A\otimes B(H_{\pi})$ is a nuclear C*-algebra. The
projection of $A\otimes B(H_{\pi})$ onto the fixed point algebra $(A\otimes
B(H_{\pi }))^{\delta_{\pi}}$ is obviously a completely positive map.
Therefore, by [\cite{choi}, Corollary 3.4. (4)] it follows that $(A\otimes
B(H_{\pi}))^{\delta_{\pi}}$ is nuclear. Assume now that $\mathcal{G}$ is
co-amenable. Therefore, $B$ is nuclear. Then, by applying [\cite{doplicher},
Corollary 7] it follows that $A^{\delta}$ is nuclear if and only if $A$ is
nuclear and thus v)$\iff$iv). Since $S_{\pi_{0}}$ is isomorphic with $%
A^{\delta}$, we have that iii)$\implies$iv) and the proof is completed.
\end{proof}


\begin{thebibliography}{99}
\bibitem{baaj} Baaj, S. and Skandalis, G., Unitaires multiplicatifs et dualit%
\'{e} pour les produits crois\'{e}s de C-alg\`{e}bres. Ann. Scient. Ec.
Norm. Sup., 4$^{e}$ s\'{e}rie 26 (1993), 425-488.

\bibitem{boca} Boca, F., Ergodic actions of compact matrix pseudogroups on
C*-algebras, Recent Advances in operator Algebras, Orleans, Asterisque 232
(1995), 93-109.

\bibitem{brown} Brown, L., G., Green, P. and Rieffel, M., A., Stable
isomorphism and strong Morita equivalence of C*-algebras, Pacific J. Math.,
71 (1977), 349-363.

\bibitem{choi} Choi, M., D. and Effros, E., Nuclear C*-algebras and the
approximation property, Amer. J. Math. 100 (1978), 61-79.

\bibitem{connes} Connes, A., Classification of injective factors, Annals of
Math. 104 (1976), 73-115.

\bibitem{doplicher} Doplicher, S., Longo, R., Roberts, J.E., and Zsido, L.,
A remark on quantum group actions and nuclearity, Reviews in Mathematical
Physics 14 (2002), N. 7-8, 787-796.

\bibitem{ralpelproc} Dumitru, R. and Peligrad, C., Compact quantum group
actions C*-algebras and invariant derivations, Proc. Amer. Math. Soc. 135,
Nr. 12 (2007), 3977-3984.

\bibitem{raljfa} Dumitru, R., Simple and prime crossed products of
C*-algebras by compact quantum group coactions, J.Funct. Anal. 257 (2009),
1480-1492.

\bibitem{ralpelspectra} Dumitru, R. and Peligrad, C., Spectra for compact
quantum group coactions and crossed products, arXiv:1006.1251v1 2010
[math.OA].

\bibitem{effros} Effros, E. and Lance, E., C., Tensor products of operator
algebras, Advances in Math. 25 (1977), 1-34.

\bibitem{fell} Fell, J., M., G., Induced representations and Banach $\star-$%
algebraic bundles, Lecture Notes in Math. 582 (1977), Springer Verlag, New
York/Berlin.

\bibitem{godement} Godement, R., A theory of spherical functions I, Trans.
Amer. Math. Soc. 73 (1952), 496-556.

\bibitem{gootman} Gootman, E. and Lazar, A., Compact group actions on
C*-algebras: An application of non-commutative duality, J. Funct. Anal. 91
(1990), 237-245.

\bibitem{green} Green, P., The local structure of local covariance algebras,
Acta Math. 140 (1978), 191-250.

\bibitem{landstad} Landstad, M., B., Algebras of spherical functions
associated with covariant systems over a compact group, Math. Scand. 47
(1980), 137-149.

\bibitem{pedersen} Pedersen, G., K., C*-algebras and their automorphism
groups, Academic Press, New York, 1979.

\bibitem{peligrad} Peligrad, C., Locally compact group actions on
C*-algebras and compact subgroups, J. Funct. Anal. 76 (1988), Nr.1, 126-139.

\bibitem{raeburn} Raeburn, I., On crossed products by coactions and their
representation theory, Proc. London Math. Soc. 64 (1992), 625-652.

\bibitem{rieffel} Rieffel, M., A., Morita equivalence for operator algebras
in: Proceedings of Symposia in Pure Math. 38 (1982), Part 1, 285-298.

\bibitem{sakai} Sakai, S., C*-algebras and W*-algebras, Springer Verlag,
Berlin and New York, 1971.

\bibitem{warner} Warner, G., Harmonic analysis on semisimple Lie groups I,
Springer Verlag, Berlin, 1972.

\bibitem{wor1} Woronowicz, S., L., Compact matrix pseudogroups, Comm. Math.
Physics 111 (1987), 613-665.

\bibitem{wor2} Woronowicz, S. L., Compact quantum groups, in: Quantum
Symmetries, Proceedings of the Les Houches Summer School, 1995,
North-Holland, Amsterdam, 1998, 845-884.
\end{thebibliography}
\end{document}